\theoremstyle{plain}
\newtheorem{theorem}{Theorem}[section]
\theoremstyle{remark}
\newtheorem{remark}[theorem]{Remark}
\theoremstyle{definition}
\newtheorem{definition}{Definition}
\newenvironment{proof of theorem 1.1}{{\noindent \em Proof of Theorem 1.1.}}{\hfill $\Box$\par}
\newenvironment{proof of theorem 1.2}{{\noindent \em Proof of Theorem 1.2.}}{\hfill $\Box$\par}
\DeclareSymbolFont{EulerExtension}{U}{euex}{m}{n}
\DeclareMathSymbol{\euintop}{\mathop} {EulerExtension}{"52}
\DeclareMathSymbol{\euointop}{\mathop} {EulerExtension}{"48}
\begin{document}
	\title{Boundedness of certain multiple Erd\'{e}lyi-Kober fractional integral operators on the Hardy space $H^1$}
	\author{Xi Chen$^{\rm 1}$, Min-Jie Luo$^{\rm 2}$\thanks{Corresponding author}}
	\date{}
	\maketitle
	\begin{center}\small
		$^{1}$\emph{Department of Mathematics, School of Mathematics and Statistics, \\ Donghua University, Shanghai 201620, \\ 
			People's Republic of China.}\\
		E-mail: \texttt{ximath@163.com}
		\vspace{0.3cm}
		
		$^{2}$\emph{Department of Mathematics, School of Mathematics and Statistics, \\ Donghua University, Shanghai 201620, \\ 
			People's Republic of China.}\\
		E-mail: \texttt{mathwinnie@live.com}, \texttt{mathwinnie@dhu.edu.cn}
	\end{center}
	
	\vspace{0.5cm}
	
	\begin{abstract}
		
		In this paper, we establish the boundedness of the multiple Erd\'{e}lyi-Kober fractional integral operators involving Fox's $H$-function on the Hardy space $H^1$. Our results generalize recent results of Kwok-Pun Ho [Proyecciones 39 (3) (2020), 663--677]. Some useful connections related to the Hausdorff operators are also mentioned. \\
		
		\noindent\textbf{Keywords}: 
		fractional integral operator, 
		Fox's $H$-function,
		Hardy space, 
		Hausdorff operator.
		\\
		
		\noindent\textbf{Mathematics Subject Classification (2020)}:
		26A33; 
		33C60; 
		42B30. 
	\end{abstract}

\section{Introduction}

The boundedness of fractional integral operators on various function spaces plays a fundamental role in the theory of Fractional Calculus. For common fractional integral operators such as the famous Riemann-Liouville fractional integral operators, some useful results have already been obtained (see, for example, \cite[Chapter 2]{Kilbas-Srivastava-Trujillo-2006}). However, for fractional integral operators arising from the Generalized Fractional Calculus --- typically those with integral kernels involving special functions --- the known results remain very limited. 

In Generalized Fractional Calculus, a typical class of operators is the multiple Erd\'{e}lyi-Kober operators introduced by Kiryakova, which are closely related to Meijer's $G$-function. For any $m\in\mathbb{Z}$, let $\mathbb{Z}_{\geq m}:=\{n\in\mathbb{Z};n\geq m\}$. Let $m, n, p, q \in \mathbb{Z}_{\geq0}$, $0 \leq m \leq q$, $0 \leq n \leq p$ and let $a_1,\cdots,a_p,b_1,\cdots, b_q\in \mathbb{C}$. Then, Meijer's $G$-function is defined by
\begin{align}\label{G-Function-Def}
	G_{p, q}^{m, n}\left[z \left|\begin{matrix}
		(a_i)_{1,p} \\
		(b_j)_{1,q}
	\end{matrix}\right.\right]
	&=G_{p, q}^{m, n}\left[z \left|\begin{matrix}
		a_1, \cdots, a_p \\
		b_1, \cdots, b_q
	\end{matrix}\right.\right] \notag\\
	&=\frac{1}{2\pi\mathrm{i}} \int_L \frac{\displaystyle \prod_{k=1}^m \Gamma(s+b_k) \prod_{i=1}^n \Gamma(1-a_i-s)}{\displaystyle \prod_{k=m+1}^q \Gamma(1-b_k-s) \prod_{i=n+1}^p \Gamma(s+a_i)}  z^{-s} \mathrm{d}s,
\end{align}
where $L$ is a suitable contour that separates the poles of  $\Gamma(s+b_k)$ from the poles of  $\Gamma(1-a_i-s)$ (see, for example, \cite[p. 313]{Kiryakova-book-1994} and \cite[p. 1]{Mathai-Saxena-Book-1973}). The multiple Erd\'{e}lyi-Kober operators $I_{\beta,m}^{(\gamma_k),(\delta_k)}$ and $W_{\beta,m}^{(\gamma_k),(\delta_k)}$ are defined by (see \cite[p. 11]{Kiryakova-book-1994} and \cite[p. 12]{Srivastava-Saxena-2001})
\begin{equation}\label{multiE-K-Def-1}
I_{\beta,m}^{(\gamma_k),(\delta_k)}f(x)
=\int_{0}^{1}G_{m,m}^{m, 0}\left[\sigma \left|\begin{matrix}
	(\gamma_k+\delta_k)_{1,m} \\
	(\gamma_k)_{1,m}
\end{matrix}\right.\right]f(x\sigma^{1/\beta})\mathrm{d}\sigma
\end{equation}
and
\begin{equation}\label{multiE-K-Def-2}
	W_{\beta,m}^{(\gamma_k),(\delta_k)}f(x)
	=\int_{1}^{\infty}G_{m,m}^{m, 0}\left[\frac{1}{\sigma} \left|\begin{matrix}
		(\gamma_k+\delta_k+1)_{1,m} \\
		(\gamma_k+1)_{1,m}
	\end{matrix}\right.\right]f(x\sigma^{1/\beta})\mathrm{d}\sigma,
\end{equation}
respectively. The $L^p$-boundedness of $I_{\beta,m}^{(\gamma_k),(\delta_k)}$ and $W_{\beta,m}^{(\gamma_k),(\delta_k)}$ is presented in \cite[p. 37, Theorem 1.2.17]{Kiryakova-book-1994}. 
Recently, Ho \cite{Ho-2020} studied the boundedness of the multiple Erd\'{e}lyi-Kober fractional integral operators \eqref{multiE-K-Def-1} and \eqref{multiE-K-Def-2} on the Hardy space $H^{1}$. 

Let $\mathcal{S}$ and $\mathcal{S}^{\prime}$ denote the class of Schwartz functions and the class of tempered distributions on $\mathbb{R}$, respectively. The convolution of two functions $f*g$ is defined as usual by
\[
(f*g)(x):=\int_{\mathbb{R}}f(x-y)g(y)\mathrm{d}y
=\int_{\mathbb{R}}f(y)g(x-y)\mathrm{d}y
\]
whenever the expression makes sense. We say that $f\in\mathcal{S}'$ is bounded if $f*\varphi\in L^{\infty}$ for any $\varphi\in\mathcal{S}$. 
The Poisson kernel $P$ is the function
\[
P(x)=\frac{1}{\pi}\frac{1}{1+|x|^2}, ~x\in\mathbb{R}.
\]
For $t>0$, let $P_t(x):=t^{-1}P(x/t)$. In addition, we write $D_{\lambda}f(x)=f(x/\lambda)$ $(x \in \mathbb{R})$ for any $\lambda >0$.
\begin{definition}[{\cite[p. 56]{Grafakos-Book-2014}}]
	Let $f\in\mathcal{S}'$ be bounded. We say that $f$ lies in the Hardy space $H^1(\mathbb{R})$ if the Poisson maximal function \begin{equation}\label{Def-MP}
		M_P f(x)=\sup _{t>0}\left|(P_t *f)(x)\right|
	\end{equation}
	lies in $L^1(\mathbb{R})$. If this is the case, we set
	\begin{equation}
		\|f\|_{H^1}=\|M_P f\|_{L_1}.
	\end{equation}
\end{definition}
The Hardy space $H^1$ has the following useful property (see \cite[p. 57, Theorem 2.1.2 (b)]{Grafakos-Book-2014} and \cite[p. 91]{Stein-Book-1993})
\begin{equation}
	H^1 \subset L^1.
\end{equation}
Now we can state Ho's main result. 
\begin{theorem}[{\cite[p. 669]{Ho-2020}}]\label{HoTh}
	Let $m\in\mathbb{Z}_{\geq1}$, $\beta>0$, $\gamma_i\in\mathbb{R}$ and $\delta_i>0$.
	\begin{itemize}
		\item[\emph{(1)}] If
		$\displaystyle \min_{1\leq k\leq m}\gamma_k>\frac{1}{\beta}-1$ and $\displaystyle \sum_{k=1}^{m}\delta_k>0$, then for any $f\in H^1$, 
		\[
		\|I_{\beta,m}^{(\gamma_k),(\delta_k)}f\|_{H^1}
		\leq \mathsf{c}_1\|f\|_{H^1},
		\]
		where
		\[
		\mathsf{c}_1:=\int_{0}^{1}\left|G_{m,m}^{m, 0}\left[\sigma \left|\begin{matrix}
			(\gamma_k+\delta_k)_{1,m} \\
			(\gamma_k)_{1,m}
		\end{matrix}\right.\right]\right|\sigma^{-1/\beta}\mathrm{d}\sigma. 
		\]
		\item[\emph{(2)}] If
		$\displaystyle \min_{1\leq k\leq m}\gamma_k>-\frac{1}{\beta}$, then for any $f\in H^1$, 
		\[
		\|W_{\beta,m}^{(\gamma_k),(\delta_k)}f\|_{H^1}
		\leq \mathsf{c}_2\|f\|_{H^1},
		\]
		where
		\[
		\mathsf{c}_2:=\int_{1}^{\infty}\left|G_{m,m}^{m, 0}\left[\frac{1}{\sigma}\left|\begin{matrix}
			(\gamma_k+\delta_k+1)_{1,m} \\
			(\gamma_k+1)_{1,m}
		\end{matrix}\right.\right]\right|\sigma^{-1/\beta}\mathrm{d}\sigma. 
		\]
	\end{itemize}
	
\end{theorem}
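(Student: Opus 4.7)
Both operators have a pure dilation structure: since $f(x\sigma^{1/\beta})=D_{\sigma^{-1/\beta}}f(x)$ with the notation $D_{\lambda}f(x)=f(x/\lambda)$ introduced above, equation \eqref{multiE-K-Def-1} expresses $I_{\beta,m}^{(\gamma_k),(\delta_k)}f$ as a Meijer-$G$-weighted integral average of the family $\{D_{\sigma^{-1/\beta}}f\}_{\sigma\in(0,1)}$, and \eqref{multiE-K-Def-2} does the same for $W_{\beta,m}^{(\gamma_k),(\delta_k)}$. The plan is accordingly to (i) establish the dilation scaling $\|D_{\lambda}f\|_{H^1}=\lambda\|f\|_{H^1}$ at the level of the Poisson maximal function, (ii) bring the supremum defining $M_P$ inside the $\sigma$-integral via a pointwise triangle inequality, and (iii) conclude by Tonelli's theorem, so that the factor $\sigma^{-1/\beta}$ produced in (i) combines with the $G$-kernel to reproduce exactly the constants $\mathsf{c}_1$ and $\mathsf{c}_2$.

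For step (i), a direct change of variable shows $(P_t*D_{\lambda}f)(x)=(P_{t/\lambda}*f)(x/\lambda)$ for every $\lambda,t>0$, so that $M_P(D_{\lambda}f)=D_{\lambda}(M_Pf)$, whence $\|D_{\lambda}f\|_{H^1}=\lambda\|f\|_{H^1}$. For step (ii), after justifying the interchange of $P_t*(\cdot)$ with $\int_0^1\,\mathrm{d}\sigma$ by Fubini (legitimate once $\mathsf{c}_1<\infty$ is known and $f\in L^1$, using $H^1\subset L^1$), the triangle inequality gives
\[
\bigl|(P_t*I_{\beta,m}^{(\gamma_k),(\delta_k)}f)(x)\bigr|
\leq\int_0^1\left|G_{m,m}^{m,0}\!\left[\sigma\left|\begin{matrix}(\gamma_k+\delta_k)_{1,m}\\(\gamma_k)_{1,m}\end{matrix}\right.\right]\right|M_P(D_{\sigma^{-1/\beta}}f)(x)\,\mathrm{d}\sigma
\]
uniformly in $t$. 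Taking the supremum in $t$ on the left, integrating in $x\in\mathbb{R}$, switching the two integrals by Tonelli, and invoking step (i) then yields $\|I_{\beta,m}^{(\gamma_k),(\delta_k)}f\|_{H^1}\leq\mathsf{c}_1\|f\|_{H^1}$. The argument for $W_{\beta,m}^{(\gamma_k),(\delta_k)}$ is literally identical, with $(0,1)$ replaced by $(1,\infty)$ and the kernel changed accordingly.

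The hard part -- and the only place where the parameter hypotheses intervene -- is verifying $\mathsf{c}_1,\mathsf{c}_2<\infty$ via the endpoint asymptotics of $G_{m,m}^{m,0}$. From the Mellin--Barnes representation \eqref{G-Function-Def}, the rightmost pole of the integrand in $G_{m,m}^{m,0}[\sigma|(\gamma_k+\delta_k)_{1,m};(\gamma_k)_{1,m}]$ sits at $s=-\min_k\gamma_k$, so the kernel is of order $\sigma^{\min_k\gamma_k}$ as $\sigma\to 0^+$; integrability of $|G_{m,m}^{m,0}|\sigma^{-1/\beta}$ near $0$ then forces $\min_k\gamma_k>1/\beta-1$. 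Near $\sigma=1^-$, standard boundary asymptotics for $G_{m,m}^{m,0}$ produce an algebraic behavior of order $(1-\sigma)^{\sum_k\delta_k-1}$, whose integrability requires $\sum_k\delta_k>0$. The parallel analysis for the $W$-kernel, using $|G_{m,m}^{m,0}[1/\sigma|(\gamma_k+\delta_k+1)_{1,m};(\gamma_k+1)_{1,m}]|$ of order $\sigma^{-(\min_k\gamma_k+1)}$ as $\sigma\to\infty$, reproduces the condition $\min_k\gamma_k>-1/\beta$.
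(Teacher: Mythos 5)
Your argument is correct and is essentially the route the paper itself takes: Theorem \ref{HoTh} is quoted from Ho, but the paper's proofs of its generalizations (Theorems \ref{MainResult-Th-1} and \ref{MainResult-Th-2}, which recover Theorem \ref{HoTh} on setting $\lambda_k=\beta_k=\beta$ and $\xi_k=\varepsilon_k=\beta$) consist of exactly your steps --- the bound $|P_t(y-x)|\leq \tfrac{1}{\pi}t^{-1}$ to justify Fubini, the commutation $M_P D_\lambda f = D_\lambda M_P f$ obtained from $(P_t*D_\lambda f)(x)=(f*P_{t/\lambda})(x/\lambda)$, Tonelli in the final estimate, and the endpoint asymptotics \eqref{H-function-limit-1} and \eqref{H-function-limit-2} of the kernel to verify finiteness of the constants. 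The only cosmetic differences are that the paper phrases the kernel estimates in terms of the $H$-function and checks the $\sigma\to 1$ endpoint explicitly in both cases, whereas you fold the $W$-case's $\sigma\to 1^{+}$ check into the blanket hypothesis $\delta_i>0$.
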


In this paper, we study the $H^1$-boundedness of the following multiple Erd\'{e}lyi-Kober fractional integral operators introduced by Galu\'{e} \emph{et al.} \cite{Galue-Kalla-Srivastava-1993}: 
\begin{equation}\label{GalueFIO-Def-1}
	I_{(\beta_k),(\lambda_k),m}^{(\gamma_k),(\delta_k)}f(x)
	:=\begin{cases}
		\displaystyle \int_0^1 H_{m,m}^{m,0}\left[u\left|\begin{matrix}
			(\gamma_k+\delta_k+1-1/\beta_k,1/\beta_k)_{1,m}\\
			(\gamma_k+1-1/\lambda_k,1/\lambda_k)_{1,m}
		\end{matrix}\right.\right]f(xu)\mathrm{d}u, ~ \text{if}~\sum_{k=1}^{m}\delta_k>0,\\
		f(x), ~\text{if}~ \delta_k=0~\text{and}~\lambda_k=\beta_k~(k=1,\cdots,m),
	\end{cases}
\end{equation}
where $m\in\mathbb{Z}_{\geq1}$, $\beta_k>0$, $\lambda_k>0$, $\delta_k\geq0$, and $\gamma_k\in\mathbb{R}$ $(k=1,\cdots,m)$.
\begin{equation}\label{GalueFIO-Def-2}
	K_{(\varepsilon_k),(\xi_k),n}^{(\tau_k),(\alpha_k)}f(x)
	:=\begin{cases}
		\displaystyle \int_1^\infty H_{n,n}^{n,0}\left[\frac{1}{u}\left|\begin{matrix}
			(\tau_k+\alpha_k+1/\varepsilon_k,1/\varepsilon_k)_{1,n}\\
			(\tau_k+1/\xi_k,1/\xi_k)_{1,n}
		\end{matrix}\right.\right]f(xu)\mathrm{d}u, ~ \text{if}~\sum_{k=1}^{n}\alpha_k>0,\\
		f(x), ~\text{if}~ \alpha_k=0~\text{and}~\varepsilon_k=\xi_k~(k=1,\cdots,n),
	\end{cases}
\end{equation}
where $n\in\mathbb{Z}_{\geq1}$, $\varepsilon_k>0$, $\xi_k>0$, $\alpha_k\geq0$, and $\tau_k\in\mathbb{R}$ $(k=1,\cdots,m)$. In the present study, we focus on the case when $\sum_{k=1}^{m}\delta_k>0$ in \eqref{GalueFIO-Def-1} and the case when $\sum_{k=1}^{n}\alpha_k>0$ in \eqref{GalueFIO-Def-2}. Here, $H_{m,m}^{m,0}$ and $H_{n,n}^{n,0}$ represent particular cases of the Fox $H$-function, which is defined analogously to the Meijer $G$-function through Mellin-Barnes integrals. Let $m, n, p, q \in \mathbb{Z}_{\geq0}$, $0 \leq m \leq q$, $0 \leq n \leq p$ and let $a_{i}, b_j\in \mathbb{C}, A_i, B_j\geq0$, where $1\leq i \leq p, 1\leq j \leq q$. The Fox $H$-function is defined by (see \cite[p. 1]{Kilbas-Saigo-Book-2004} and \cite[p. 58]{Kilbas-Srivastava-Trujillo-2006}):
\begin{align}\label{H-Function-Def}
	H_{p, q}^{m, n}\left[z \left|\begin{matrix}
		(a_i, A_i)_{1,p} \\
		(b_j, B_j)_{1,q}
	\end{matrix}\right.\right]
	&=H_{p, q}^{m, n}\left[z \left|\begin{matrix}
		(a_1, A_1), \ldots,(a_p, A_p) \\
		(b_1, B_1), \cdots,(b_q, B_q)
	\end{matrix}\right.\right] \notag\\
	&=\frac{1}{2 \pi \mathrm{i}} \int_L \frac{\displaystyle\prod_{k=1}^m \Gamma(B_k s+b_k) \prod_{i=1}^n \Gamma(1-a_i-A_i s)}{\displaystyle \prod_{k=m+1}^q \Gamma(1-b_k-B_k s) \prod_{i=n+1}^p \Gamma(A_i s+a_i)}  z^{-s} \mathrm{d}s,
\end{align}
where $L$ is a suitable contour that separates the poles of  $\Gamma\left(B_k s+b_k\right)$ from the poles of  $\Gamma\left(1-a_i-A_i s\right)$. To further clarify the definition, we take $L=L_{-\infty}$ and require that
\begin{equation}\label{H-function-Condition-1}
	\Delta:=\sum_{i=1}^p A_i-\sum_{j=1}^q B_j=0.
\end{equation}
Here, $L_{-\infty}$ denotes a left loop situated in a horizontal strip starting at the point $-\infty+\mathrm{i}\varphi_1$ and terminating at the point $-\infty+\mathrm{i}\varphi_2$ with $-\infty<\varphi_1<\varphi_2<\infty$. Then the $H$-function defined by \eqref{H-Function-Def} makes sense for $0<|z|<\delta$, where 
\begin{equation}\label{H-function-Condition-2}
\delta:=\prod_{j=1}^{p}A_j^{-A_j}\prod_{j=1}^{q}B_j^{B_j}.
\end{equation}

Recall that (see \cite[p. 31, Property 2.4]{Kilbas-Saigo-Book-2004})
\[
H_{p, q}^{m, n}\left[z \left|\begin{matrix}
	(a_i, 1/\beta)_{1,p} \\
	(b_j, 1/\beta)_{1,q}
\end{matrix}\right.\right]
=\beta G_{p, q}^{m, n}\left[z^{\beta} \left|\begin{matrix}
	(a_i)_{1,p} \\
	(b_j)_{1,q}
\end{matrix}\right.\right]~(\beta>0)
\]
and \cite[p. 32, Property 2.5]{Kilbas-Saigo-Book-2004}
\[
z^{\rho}G_{p, q}^{m, n}\left[z\left|\begin{matrix}
	(a_i)_{1,p} \\
	(b_j)_{1,q}
\end{matrix}\right.\right]
=G_{p, q}^{m, n}\left[z\left|\begin{matrix}
	(a_i+\rho)_{1,p} \\
	(b_j+\rho)_{1,q}
\end{matrix}\right.\right]~(\rho\in\mathbb{C}).
\]
By letting $\beta_k=\lambda_k=\beta$ ($k=1,\cdots,m$) in \eqref{GalueFIO-Def-1}, we obtain 
\begin{align*}
I_{(\beta,\cdots,\beta),(\beta,\cdots,\beta),m}^{(\gamma_k),(\delta_k)}f(x)
&=\int_0^1 \beta G_{m,m}^{m,0}\left[u^\beta\left|\begin{matrix}
	(\gamma_k+\delta_k+1-1/\beta)_{1,m}\\
	(\gamma_k+1-1/\beta)_{1,m}
\end{matrix}\right.\right]f(xu)\mathrm{d}u\\
&=\int_0^1  G_{m,m}^{m,0}\left[\sigma\left|\begin{matrix}
	(\gamma_k+\delta_k+1-1/\beta)_{1,m}\\
	(\gamma_k+1-1/\beta)_{1,m}
\end{matrix}\right.\right]f(x\sigma^{1/\beta})\sigma^{1/\beta-1}\mathrm{d}\sigma
=I_{\beta,m}^{(\gamma_k),(\delta_k)}f(x).
\end{align*}
Similarly, we have 
\[
K_{(\beta,\cdots,\beta),(\beta,\cdots,\beta),n}^{(\tau_k),(\alpha_k)}f(x)
=W_{\beta,n}^{(\gamma_k),(\delta_k)}f(x).
\]
In addition, if we just let $\lambda_k=\beta_k$ $(k=1,\cdots,m)$ in \eqref{GalueFIO-Def-1} and $\xi_k=\varepsilon_k$ $(k=1,\cdots,n)$ in \eqref{GalueFIO-Def-2}, respectively, we obtain the operators $I_{(\beta_k),m}^{(\gamma_k),(\delta_k)}$ and $K_{(\varepsilon_k),n}^{(\tau_k),(\alpha_k)}$ introduced by Kalla and Kiryakova (see \cite{Kalla-Kiryakova-1990a} and \cite{Kalla-Kiryakova-1990b}).  

Since the digital copy of the paper \cite{Galue-Kalla-Srivastava-1993} is not easily available, the interested readers may refer to \cite{Galue-Kiryakova-Kalla-1993}, \cite{Kiryakova-1997}, \cite{Kiryakova-2010} and \cite{Srivastava-Saxena-2001} for details.

\section{Preliminaries}

In this section, we present some results concerning the asymptotic behavior of the $H$-function, which will play a crucial role in the process of proving the main conclusions in Section \ref{MainResults}.

The asymptotic behaviour of the $H$-function near $z=0$ is well-known. Under the condition $\Delta=0$, we have \cite[p. 20, Corollary 1.11.1]{Kilbas-Saigo-Book-2004}
\begin{equation}\label{H-function-limit-1}
	H_{p, q}^{m, n}\left[z \left|\begin{matrix}
		(a_i, A_i)_{1,p} \\
		(b_j, B_j)_{1,q}
	\end{matrix}\right.\right]=\mathcal{O}\big(z^{\rho^*}\big)~(z \rightarrow 0),
\end{equation}
where 
\[
\rho^*:=\min _{1 \leq k \leq m}\left[\frac{\operatorname{Re}\left(b_k\right)}{B_k}\right].
\]

However, the asymptotic behavior of the $H$-function near $z=\delta$, where $\delta$ is given by \eqref{H-function-Condition-2}, is much more complex and has only recently been clearly characterized. When $\Delta=0$, we have (see \cite[p. 350, Theorem 1]{Karp-2020} and \cite[p. 22, Remark 1]{Kiryakova-PanevaKonovska-2024})
\begin{equation}\label{H-function-limit-2}
	H_{m, m}^{m, 0}\left[z\left|
	\begin{matrix}
		(a_k, A_k)_1^m\\
		(b_k, B_k)_1^m
	\end{matrix}\right.\right]=\mathcal{O}\left((1-z)^{\mu-1}\right)~(z \rightarrow 1^-)
\end{equation}
where
\begin{equation}\label{H-function-limit-2-Condition}
\mu=\sum_{k=1}^m(a_k-b_k)>0.
\end{equation}
In particular, if $A_k=B_k=1$ $(k=1,\cdots,m)$, we obtain the more familiar result (see \cite[p. 317, Eq. (A.22)]{Kiryakova-book-1994}) 
\begin{equation}\label{G-function-limit-1}
	G_{m, m}^{m, 0}\left[z\left|
	\begin{matrix}
		(a_k)_1^m\\
		(b_k)_1^m
	\end{matrix}\right.\right]=\mathcal{O}\left((1-z)^{\mu-1}\right)~(z \rightarrow 1^-)
\end{equation}
where $\mu$ is given by \eqref{H-function-limit-2-Condition}. The estimate \eqref{G-function-limit-1} can be derived from N\o rlund's famous formula (see \cite[p. 344, Eq. (2)]{Karp-Prilepkina-2017}). 

\section{Main results}\label{MainResults}

In this section, we establish the $H^1$-boundedness of the multiple Erd\'{e}lyi-Kober fractional integral operators $I_{(\beta_k),(\lambda_k), m}^{(\gamma_k),(\delta_k)}$ and $K_{(\varepsilon_k),(\xi_k), n}^{(\tau_k),(\alpha_k)}$. 
\begin{theorem}\label{MainResult-Th-1}
	Let $m \in \mathbb{Z}_{\geq1},\beta_k,\lambda_k>0,\delta_k \geq 0,\gamma_k \in \mathbb{R}$ $(k=1,\ldots,m)$. If $(\beta_k)_{1,m},(\lambda_k)_{1,m},(\delta_k)_{1,m} $ and $ (\gamma_k)_{1,m}$ satisfy the following conditions:
	\begin{itemize}
	\item[\emph{(1)}] $\displaystyle 
			\sum_{k=1}^m \frac{1}{\lambda_k}=\sum_{k=1}^m \frac{1}{\beta_k}$;
	\item[\emph{(2)}] $\displaystyle 
			\sum_{k=1}^m \delta_k>0$;
	\item[\emph{(3)}] $\displaystyle\min_{1\leq k\leq m}(\gamma_k+1) \lambda_k>1$.
	\end{itemize}
	then for any $f \in H^1$,we have
	\begin{equation}\label{MainResult-Th-1-1}
		\left\|I_{(\beta_k),(\lambda_k), m}^{(\gamma_k),(\delta_k)} f\right\|_{H^1} \leq\mathsf{k}_1\|f\|_{H^1},
	\end{equation}
	where
	\[
	\mathsf{k}_1\equiv 
	\int_0^1\left|H_{m, m}^{m, 0}\left[\sigma \left|\begin{matrix}
		(\gamma_k+\delta_k+1-1/\beta_k,1/\beta_k)_{1,m}\\
		(\gamma_k+1-1/\lambda_k,1/\lambda_k)_{1,m}
	\end{matrix}\right.\right]\right| \frac{\mathrm{d}\sigma}{\sigma}.
	\]
\end{theorem}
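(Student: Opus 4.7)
The plan is to follow the strategy of Ho's proof of Theorem \ref{HoTh}: rewrite $I_{(\beta_k),(\lambda_k),m}^{(\gamma_k),(\delta_k)}$ as a weighted integral of dilations of $f$, exploit the scaling invariance of the Hardy norm, and then apply a Minkowski-type integral inequality. First, I would recast
\[
I_{(\beta_k),(\lambda_k),m}^{(\gamma_k),(\delta_k)}f(x)
= \int_0^1 \Phi(u)\, D_{1/u}f(x)\,\mathrm{d}u,
\]
where $\Phi(u)$ denotes the $H$-function kernel in \eqref{GalueFIO-Def-1} and $D_{1/u}f(x)=f(xu)$ by the paper's dilation convention. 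Using the commutation $P_t \ast D_\lambda g = D_\lambda(P_{t/\lambda}\ast g)$ together with a change of variables in the $L^1$-norm of the Poisson maximal function, one obtains the scaling identity $\|D_\lambda g\|_{H^1}=\lambda \|g\|_{H^1}$ for every $\lambda>0$.

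With this scaling at hand, the vector-valued Minkowski integral inequality (equivalently, the sublinearity of the Poisson maximal function combined with Tonelli's theorem) yields
\[
\bigl\|I_{(\beta_k),(\lambda_k),m}^{(\gamma_k),(\delta_k)}f\bigr\|_{H^1}
\leq \int_0^1 |\Phi(u)|\,\|D_{1/u}f\|_{H^1}\,\mathrm{d}u
= \|f\|_{H^1}\int_0^1 |\Phi(u)|\,\frac{\mathrm{d}u}{u},
\]
which is exactly the estimate \eqref{MainResult-Th-1-1}. A preliminary step is to observe that condition (1) gives $\Delta = \sum_k 1/\beta_k - \sum_k 1/\lambda_k = 0$, so that the $H$-function is well-defined and the asymptotic estimates \eqref{H-function-limit-1} and \eqref{H-function-limit-2} established in Section 2 apply to $\Phi$.

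The main obstacle, and the place where each hypothesis enters, is verifying that $\mathsf{k}_1<\infty$; this finiteness is also what legitimizes the Bochner-integral interpretation underlying the Minkowski step. Near $u=0$, \eqref{H-function-limit-1} yields $\Phi(u)=\mathcal{O}(u^{\rho^*})$ with
\[
\rho^* = \min_{1\le k\le m}\frac{\gamma_k+1-1/\lambda_k}{1/\lambda_k} = \min_{1\le k\le m}\bigl[(\gamma_k+1)\lambda_k-1\bigr],
\]
so $|\Phi(u)|/u=\mathcal{O}(u^{\rho^*-1})$ is integrable at $0$ precisely when $\rho^*>0$, i.e.\ precisely under condition (3). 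Near $u=1$, \eqref{H-function-limit-2} gives $\Phi(u)=\mathcal{O}((1-u)^{\mu-1})$ with
\[
\mu=\sum_{k=1}^m\bigl[(\gamma_k+\delta_k+1-1/\beta_k)-(\gamma_k+1-1/\lambda_k)\bigr]
=\sum_{k=1}^m\delta_k+\sum_{k=1}^m\frac{1}{\lambda_k}-\sum_{k=1}^m\frac{1}{\beta_k}=\sum_{k=1}^m\delta_k>0
\]
by conditions (1) and (2); hence $(1-u)^{\mu-1}/u$ is integrable at $1$. Combining the two endpoint estimates gives $\mathsf{k}_1<\infty$ and closes the argument.
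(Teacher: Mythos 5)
Your proposal is correct and follows essentially the same route as the paper: the Minkowski/Tonelli step you invoke is exactly what the paper carries out by hand via the pointwise bound $M_P\bigl(I_{(\beta_k),(\lambda_k),m}^{(\gamma_k),(\delta_k)}f\bigr)(y)\le\int_0^1|\Phi(\sigma)|\,D_{1/\sigma}M_Pf(y)\,\mathrm{d}\sigma$, using the same commutation $M_PD_\lambda=D_\lambda M_Pf$ and the same Fubini interchange justified by $f\in H^1\subset L^1$ together with $\mathsf{k}_1<\infty$. Your verification of $\mathsf{k}_1<\infty$ — the exponent $\rho^*=\min_k[(\gamma_k+1)\lambda_k-1]>0$ at $\sigma=0$ from condition (3) and $\mu=\sum_k\delta_k>0$ at $\sigma=1$ from conditions (1) and (2) — matches the paper's computation exactly.
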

\begin{proof}
	Let $f \in H^1$, for any fixed $t>0$, we consider
	\[
		\int_{\mathbb{R}} \int_0^1\left|P_t(y-x) H_{m, m}^{m, 0}\left[\sigma \left|\begin{matrix}
			(\gamma_k+\delta_k+1-1/\beta_k,1/\beta_k)_{1,m}\\
			(\gamma_k+1-1/\lambda_k,1/\lambda_k)_{1,m}
		\end{matrix}\right.\right] f(x \sigma)\right| \mathrm{d} \sigma \mathrm{d} x.
	\]
	As 
	\begin{equation}\label{MainResult-Th-1-Proof-1}
	|P_t(y-x)|=\frac{1}{\pi} \cdot \frac{t}{t^2+(y-x)^2} \leq \frac{1}{\pi}\cdot t^{-1}
	\end{equation}
	for $x,y\in\mathbb{R}$ and $t>0$, we have   
	\begin{align*}
			&\int_{\mathbb{R}} \int_0^1\left|P_t(y-x) H_{m, m}^{m, 0}\left[\sigma \left|\begin{matrix}
				(\gamma_k+\delta_k+1-1/\beta_k,1/\beta_k)_{1,m}\\
				(\gamma_k+1-1/\lambda_k,1/\lambda_k)_{1,m}
			\end{matrix}\right.\right] f(x \sigma)\right| \mathrm{d}\sigma \mathrm{d}x \\
			&\leq  \frac{1}{\pi}\cdot t^{-1} \int_{\mathbb{R}} \int_0^1\left|H_{m, m}^{m, 0}\left[\sigma \left|\begin{matrix}
				(\gamma_k+\delta_k+1-1/\beta_k,1/\beta_k)_{1,m}\\
				(\gamma_k+1-1/\lambda_k,1/\lambda_k)_{1,m}
			\end{matrix}\right.\right] f(x \sigma)\right| \mathrm{d}\sigma \mathrm{d}x\\
			&=\frac{1}{\pi}\cdot t^{-1} \left(\int_{\mathbb{R}}|f(z)|\mathrm{d}z\right)
			\left(\int_0^1\left|H_{m, m}^{m, 0}\left[\sigma \left|\begin{matrix}
				(\gamma_k+\delta_k+1-1/\beta_k,1/\beta_k)_{1,m}\\
				(\gamma_k+1-1/\lambda_k,1/\lambda_k)_{1,m}
			\end{matrix}\right.\right]\right|\frac{\mathrm{d}\sigma}{\sigma}\right).
	\end{align*}
	Since $f \in H^1 \subset L^1$, we can write
	\begin{align*}
			&\int_{\mathbb{R}} \int_0^1\left|P_t(y-x) H_{m, m}^{m, 0}\left[\sigma \left|\begin{matrix}
				(\gamma_k+\delta_k+1-1/\beta_k,1/\beta_k)_{1,m}\\
				(\gamma_k+1-1/\lambda_k,1/\lambda_k)_{1,m}
			\end{matrix}\right.\right] f(x \sigma)\right| \mathrm{d}\sigma \mathrm{d}x \\
			&\leq \frac{1}{\pi}\cdot t^{-1}\|f\|_{L_1}(I_1+I_2),
	\end{align*}
	where
	\[
		I_1:=\int_0^{\frac{1}{2}}\left|H_{m, m}^{m, 0}\left[\sigma \left|\begin{matrix}
			(\gamma_k+\delta_k+1-1/\beta_k,1/\beta_k)_{1,m}\\
			(\gamma_k+1-1/\lambda_k,1/\lambda_k)_{1,m}
		\end{matrix}\right.\right]
		\right|\frac{\mathrm{d} \sigma}{\sigma}
	\]
	and
	\[
		I_2:=\int_{\frac{1}{2}}^1\left|H_{m, m}^{m, 0}\left[\sigma \left|\begin{matrix}
			(\gamma_k+\delta_k+1-1/\beta_k,1/\beta_k)_{1,m}\\
			(\gamma_k+1-1/\lambda_k,1/\lambda_k)_{1,m}
		\end{matrix}\right.\right]\right|\frac{\mathrm{d} \sigma}{\sigma}.
	\]
	
	Under the condition (1), the function $H_{m,m}^{m,0}$ is sometimes called \emph{delta neutral} \cite[p. 346]{Karp-Prilepkina-2017}. 
	For the integral $I_1$, according to \eqref{H-function-limit-1} and condition (3), we have
	\[
		H_{m, m}^{m, 0}\left[\sigma \left|\begin{matrix}
			(\gamma_k+\delta_k+1-1/\beta_k,1/\beta_k)_{1,m}\\
			(\gamma_k+1-1/\lambda_k,1/\lambda_k)_{1,m}
		\end{matrix}\right.\right]=\mathcal{O}\left(\sigma^{\rho_0}\right)~(\sigma \rightarrow 0^{+})
	\]
	where $\displaystyle\rho_0=\min_{1\leq k \leq m}(\gamma_k+1)\lambda_k-1>0$. Therefore there exists a positive number $C_1$ so that 
	\[
		I_1\leq C_1 \int_0^{\frac{1}{2}} \sigma^{\rho_0-1} \mathrm{d} \sigma<\infty.
	\]
	For the integral $I_2$, according to \eqref{H-function-limit-2} and conditions (1) and (2), we obtain
	\[
		H_{m, m}^{m, 0}\left[\sigma \left|\begin{matrix}
			(\gamma_k+\delta_k+1-1/\beta_k,1/\beta_k)_{1,m}\\
			(\gamma_k+1-1/\lambda_k,1/\lambda_k)_{1,m}
		\end{matrix}\right.\right]=\mathcal{O} \left((1-\sigma)^{\mu_0-1}\right)~(\sigma\rightarrow 1^-)
	\]
	where
	\[ 
	\mu_0=\sum\limits_{k=1}^m \delta_k+\sum\limits_{k=1}^m\left(\frac{1}{\lambda_k}-\frac{1}{\beta_k}\right)>0.
	\]
	Hence there exists a positive number $C_2$ so that 
	\[
		I_2 \leq C_2\int_{\frac{1}{2}}^1(1-\sigma)^{\mu_0-1} \sigma^{-1} \mathrm{d}\sigma<\infty.
	\]
	
	Thus we are allowed to use Fubini's theorem to obtain
\begin{align*}
	P_t *I_{(\beta_k),(\lambda_k), m}^{(\gamma_k),(\delta_k)} f
	&=\int_{\mathbb{R}} \int_0^1 P_t(y-x) H_{m, m}^{m, 0}\left[\sigma \left|\begin{matrix}
	(\gamma_k+\delta_k+1-1/\beta_k,1/\beta_k)_{1,m}\\
	(\gamma_k+1-1/\lambda_k,1/\lambda_k)_{1,m}
	\end{matrix}\right.\right] f(x \sigma) \mathrm{d}\sigma \mathrm{d}x\\
	&=\int_0^1 H_{m, m}^{m, 0}\left[\sigma \left|\begin{matrix}
	(\gamma_k+\delta_k+1-1/\beta_k,1/\beta_k)_{1,m}\\
	(\gamma_k+1-1/\lambda_k,1/\lambda_k)_{1,m}
	\end{matrix}\right.\right]\left(\int_{\mathbb{R}} P_t(y-x) f(x \sigma)\mathrm{d}x\right)\mathrm{d}\sigma.
\end{align*}
The inner integral can also be represented as
\[
\int_{\mathbb{R}} P_t(y-x) f(x \sigma)\mathrm{d}x
=\int_{\mathbb{R}} P_t(y-x) D_{1/\sigma}f(x)\mathrm{d}x
=P_t \ast (D_{1/\sigma}f).
\]
From \eqref{Def-MP} we have 
\begin{equation}\label{MainResult-Th-1-Proof-2}
|P_t \ast (D_{1/\sigma}f)|\leq M_P D_{1/\sigma}f,
\end{equation}
and thus
\[
\left|P_t *I_{(\beta_k),(\lambda_k), m}^{(\gamma_k),(\delta_k)} f\right|
\leq \int_0^1 \left|H_{m, m}^{m, 0}\left[\sigma \left|\begin{matrix}
	(\gamma_k+\delta_k+1-1/\beta_k,1/\beta_k)_{1,m}\\
	(\gamma_k+1-1/\lambda_k,1/\lambda_k)_{1,m}
\end{matrix}\right.\right]\right|\cdot
M_P D_{1/\sigma}f(y)
\mathrm{d}\sigma.
\]
	
For any $\lambda \in (0,\infty)$ and $f \in H^1$, we have
\begin{align*}
		(P_t *D_\lambda f)(x)
		&=\int_{\mathbb{R}}  P_t(x-y)f\left(\frac{y}{\lambda}\right)\mathrm{d}y \\
		&=\int_{\mathbb{R}} f\left(\frac{x}{\lambda}-z\right) P_t(\lambda z) \cdot \lambda \mathrm{d}z ~~~ (y=x-\lambda z)\\
		& =\int_{\mathbb{R}} f\left(\frac{x}{\lambda}-z\right) P_{t/\lambda}(z) \mathrm{d}z
		=(f \ast P_{ t/\lambda})\left(\frac{x}{\lambda}\right).
\end{align*}  
and thus
\begin{align}\label{MainResult-Th-1-Proof-3}
	M_P D_\lambda f(x) 
	&=\sup _{t \geq 0}|(P_t *D_\lambda f)(x)|\notag\\
	&=\sup _{t \geq 0}\left|\left(f \ast P_{t/\lambda}\right)\left(\frac{x}{\lambda}\right)\right|
	=M_P f\left(\frac{x}{\lambda}\right)
	=D_\lambda M_P f(x).
\end{align}
We now have
\begin{equation}\label{MainResult-Th-1-Proof-4}
	\left|P_t * I_{(\beta_k),(\lambda_k), m}^{(\gamma_k),(\delta_k)} f\right| \leq \int_0^1\left|H_{m, m}^{m, 0}\left[\sigma \left|\begin{matrix}
	(\gamma_k+\delta_k+1-1/\beta_k,1/\beta_k)_{1,m}\\
	(\gamma_k+1-1/\lambda_k,1/\lambda_k)_{1,m}
	\end{matrix}\right.\right]\right| \cdot D_{1/\sigma} M_P f(y) \mathrm{d}\sigma.
\end{equation}
By taking supremum over $t>0$ on both side of \eqref{MainResult-Th-1-Proof-4} gives
\begin{align*}
	M_P I_{(\beta_k),(\lambda_k), m}^{(\gamma_k),(\delta_k)} f&=\sup _{t>0}\left|P_t \ast I_{(\beta_k),(\lambda_k), m}^{(\gamma_k),(\delta_k)} f\right| \\
	&\leq
	\int_0^1\left|H_{m, m}^{m, 0}\left[\sigma \left|\begin{matrix}
	(\gamma_k+\delta_k+1-1/\beta_k,1/\beta_k)_{1,m}\\
	(\gamma_k+1-1/\lambda_k,1/\lambda_k)_{1,m}
	\end{matrix}\right.\right]\right| \cdot D_{1/\sigma} M_P f(y)\mathrm{d} \sigma.
\end{align*}
	
Finally, we obtain
\begin{align*}
	\left\|I_{(\beta_k),(\lambda_k), m}^{(\gamma_k),(\delta_k)} f\right\|_{H^1}
	&=\int_{\mathbb{R}}\left|M_P I_{(\beta_k),(\lambda_k), m}^{(\gamma_k),(\delta_k)} f(y)\right|\mathrm{d}y\\
	&\leq \int_{\mathbb{R}} \int_0^1\left|H_{m, m}^{m, 0}\left[\sigma \left|\begin{matrix}
	(\gamma_k+\delta_k+1-1/\beta_k,1/\beta_k)_{1,m}\\
	(\gamma_k+1-1/\lambda_k,1/\lambda_k)_{1,m}
	\end{matrix}\right.\right]\right| \cdot D_{1/\sigma} M_P f(y) \mathrm{d}\sigma \mathrm{d}y\\
	&=\int_{\mathbb{R}} \int_0^1\left|H_{m, m}^{m, 0}\left[\sigma \left|\begin{matrix}
		(\gamma_k+\delta_k+1-1/\beta_k,1/\beta_k)_{1,m}\\
		(\gamma_k+1-1/\lambda_k,1/\lambda_k)_{1,m}
	\end{matrix}\right.\right]\right| \cdot M_P f(\sigma y) \mathrm{d}\sigma \mathrm{d}y\\
	&=\left(\int_{\mathbb{R}}M_P f(y)\mathrm{d}y\right)\left(\int_0^1\left|H_{m, m}^{m, 0}\left[\sigma \left|\begin{matrix}
		(\gamma_k+\delta_k+1-1/\beta_k,1/\beta_k)_{1,m}\\
		(\gamma_k+1-1/\lambda_k,1/\lambda_k)_{1,m}
	\end{matrix}\right.\right]\right| \frac{\mathrm{d}\sigma}{\sigma} \right)\\
	&=\|f\|_{H^1}\left(\int_0^1\left|H_{m, m}^{m, 0}\left[\sigma \left|\begin{matrix}
		(\gamma_k+\delta_k+1-1/\beta_k,1/\beta_k)_{1,m}\\
		(\gamma_k+1-1/\lambda_k,1/\lambda_k)_{1,m}
	\end{matrix}\right.\right]\right| \frac{\mathrm{d}\sigma}{\sigma} \right),
\end{align*}
which is equivalent to \eqref{MainResult-Th-1-1}. This completes the proof.
\end{proof}
\begin{remark}
	By letting $\lambda_k=\beta_k=\beta$ $(k=1,\cdots,m)$, Theorem \ref{MainResult-Th-1} reduces to the first assertion of Theorem \ref{HoTh}.
\end{remark}

\begin{theorem}\label{MainResult-Th-2}
	Let $n \in \mathbb{Z}_{\geq0},\varepsilon_k,\xi_k>0,\alpha_k \geq 0,\tau_k \in \mathbb{R}$ $(k=1,2,\ldots,n)$. If $(\varepsilon_k)_{1,n},(\xi_k)_{1,n},(\alpha_k)_{1,n} $ and $ (\tau_k)_{1,n}$ satisfy the following conditions:
\begin{itemize}
	\item[\emph{(1)}] $\displaystyle \sum_{k=1}^n \frac{1}{\xi_k}=\sum_{k=1}^n \frac{1}{\varepsilon_k}$;
	\item[\emph{(2)}] $\displaystyle \sum\limits_{k=1}^n \alpha_k>0$;
	\item[\emph{(3)}] $\displaystyle\min _{1 \leq k \leq n} \tau_k \xi_k>-1$,
\end{itemize}
then for any $f \in H^1$,we have
\begin{equation}\label{MainResult-Th-2-1}
	\left\|K_{(\varepsilon_k),(\xi_k), n}^{(\tau_k),(\alpha_k)} f\right\|_{H^1} 
	\leq \mathsf{k}_2\|f\|_{H^1},
\end{equation}
where
\[
\mathsf{k}_2\equiv \int_1^{\infty} \left| 
H_{n, n}^{n, 0}\left[\frac{1}{\sigma}\left|\begin{matrix}
	(\tau_k+\alpha_k+1/\varepsilon_k,1/\varepsilon_k)_{1,n}\\
	(\tau_k+1/\xi_k,1/\xi_k)_{1,n}
\end{matrix}\right.\right] \right|\frac{\mathrm{d}\sigma}{\sigma}.
\]
\end{theorem}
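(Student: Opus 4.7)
The plan is to follow the proof of Theorem~\ref{MainResult-Th-1} almost verbatim, with two bookkeeping changes: the variable $u$ now ranges over $[1,\infty)$ and the $H$-function is evaluated at $1/u$ rather than $u$. For brevity, denote the kernel by
$$
\mathcal{H}(z) := H_{n, n}^{n, 0}\!\left[z \left|\begin{matrix}
	(\tau_k+\alpha_k+1/\varepsilon_k,1/\varepsilon_k)_{1,n}\\
	(\tau_k+1/\xi_k,1/\xi_k)_{1,n}
\end{matrix}\right.\right].
$$

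First, fixing $t>0$ and $y \in \mathbb{R}$, I would verify absolute convergence of the double integral $\int_{\mathbb{R}}\int_1^\infty |P_t(y-x)||\mathcal{H}(1/u)||f(xu)|\,du\,dx$, using the bound $|P_t(y-x)| \le (\pi t)^{-1}$ as in \eqref{MainResult-Th-1-Proof-1} and the substitution $z = xu$ in the inner integral (which yields a factor $1/u$ together with $\|f\|_{L^1}$). This reduces the finiteness question to showing $\int_1^\infty |\mathcal{H}(1/u)|\,du/u < \infty$, or equivalently, via $\sigma = 1/u$, that $\int_0^1 |\mathcal{H}(\sigma)|\,d\sigma/\sigma < \infty$. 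I would split this integral at $\sigma = 1/2$. Condition~(1) gives $\Delta = \sum 1/\varepsilon_k - \sum 1/\xi_k = 0$, so both asymptotic estimates \eqref{H-function-limit-1} and \eqref{H-function-limit-2} are applicable. Near $\sigma = 0$ the exponent from \eqref{H-function-limit-1} is
$$
\rho_0 := \min_{1\le k\le n}\frac{\tau_k + 1/\xi_k}{1/\xi_k} = \min_{1\le k\le n}(\tau_k\xi_k + 1),
$$
which is positive by condition~(3). Near $\sigma = 1$ the relevant exponent is $\mu_0 - 1$, where
$$
\mu_0 = \sum_{k=1}^n\big[(\tau_k+\alpha_k+1/\varepsilon_k)-(\tau_k+1/\xi_k)\big] = \sum_{k=1}^n \alpha_k + \sum_{k=1}^n\Big(\frac{1}{\varepsilon_k}-\frac{1}{\xi_k}\Big) = \sum_{k=1}^n \alpha_k > 0
$$
by conditions~(1) and (2) combined. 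Hence both pieces $\int_0^{1/2}\sigma^{\rho_0-1}\,d\sigma$ and $\int_{1/2}^1 (1-\sigma)^{\mu_0-1}\sigma^{-1}\,d\sigma$ are finite.

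Once Fubini is justified, the remainder is essentially a transcription of the earlier argument. I would write
$$
\big(P_t \ast K_{(\varepsilon_k),(\xi_k),n}^{(\tau_k),(\alpha_k)} f\big)(y) = \int_1^\infty \mathcal{H}(1/u)\,(P_t \ast D_{1/u}f)(y)\,du,
$$
where $D_{1/u}f(x) = f(xu)$, then apply $|P_t \ast D_{1/u}f| \le M_P D_{1/u}f$ together with the dilation identity $M_P D_{1/u}f = D_{1/u}M_P f$ from \eqref{MainResult-Th-1-Proof-3}, take $\sup_{t>0}$, and integrate over $y$. The change of variable $z = uy$ in $\int_{\mathbb{R}} D_{1/u}M_P f(y)\,dy$ produces the factor $u^{-1}\|M_P f\|_{L^1} = u^{-1}\|f\|_{H^1}$, and collecting everything gives \eqref{MainResult-Th-2-1} with constant $\mathsf{k}_2$. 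The only genuine checkpoint is the endpoint behaviour at $\sigma \to 1^-$, where $\mu_0>0$ hinges on combining conditions~(1) and (2); every other step is parallel to the proof of Theorem~\ref{MainResult-Th-1}.
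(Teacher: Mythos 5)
Your proposal is correct and follows essentially the same route as the paper: the same Poisson-kernel bound, the same two asymptotic estimates \eqref{H-function-limit-1} and \eqref{H-function-limit-2} yielding the exponents $\min_k(\tau_k\xi_k+1)>0$ and $\mu_0=\sum_k\alpha_k>0$, and the same Fubini--maximal-function--dilation argument. The only difference is cosmetic: you substitute $\sigma=1/u$ and split $\int_0^1$ at $1/2$, whereas the paper keeps the variable on $[1,\infty)$ and splits at $2$.
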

\begin{proof}
The proof is similar to that of Theorem \ref{MainResult-Th-1}. Let $f \in H^1$. Then for any fixed $t>0$, we consider
	\[
		\int_{\mathbb{R}} \int_1^{\infty} \left\lvert\, P_t(y-x) H_{n, n}^{n, 0}\left[\frac{1}{\sigma}\left|\begin{matrix}
			(\tau_k+\alpha_k+1/\varepsilon_k,1/\varepsilon_k)_{1,n}\\
			(\tau_k+1/\xi_k,1/\xi_k)_{1,n}
		\end{matrix}\right.\right] f(x\sigma) \right\rvert\, \mathrm{d}\sigma \mathrm{d}x.
	\]
	From \eqref{MainResult-Th-1-Proof-1}, we have
	\begin{align*}
		&\int_{\mathbb{R}} \int_1^{\infty} \left\lvert\, P_t(y-x) H_{n, n}^{n, 0}\left[\frac{1}{\sigma}\left|\begin{matrix}
				(\tau_k+\alpha_k+1/\varepsilon_k,1/\varepsilon_k)_{1,n}\\
				(\tau_k+1/\xi_k,1/\xi_k)_{1,n}
			\end{matrix}\right.\right] f(x \sigma) \right\rvert\, \mathrm{d}\sigma \mathrm{d}x\\
		&\leq \frac{1}{\pi}\cdot t^{-1} \int_{\mathbb{R}} \int_1^{\infty} \left\lvert\, H_{n, n}^{n, 0}\left[\frac{1}{\sigma}\left|\begin{matrix}
				(\tau_k+\alpha_k+1/\varepsilon_k,1/\varepsilon_k)_{1,n}\\
				(\tau_k+1/\xi_k,1/\xi_k)_{1,n}
			\end{matrix}\right.\right] f(x \sigma) \right\rvert\, \mathrm{d}\sigma \mathrm{d}x\\
		&\leq \frac{1}{\pi}\cdot t^{-1}\left(\int_{\mathbb{R}}|f(z)| d z\right) \cdot\left(\int_1^{\infty} \left\lvert\, H_{n, n}^{n, 0}\left[\frac{1}{\sigma}\left|\begin{matrix}
				(\tau_k+\alpha_k+1/\varepsilon_k,1/\varepsilon_k)_{1,n}\\
				(\tau_k+1/\xi_k,1/\xi_k)_{1,n}
			\end{matrix}\right.\right] \cdot \frac{1}{\sigma} \right\rvert\, \mathrm{d}\sigma\right).
	\end{align*}
Since $f \in H^1 \subset L_1$,we have
\begin{align*}
	&\int_{\mathbb{R}} \int_1^{\infty} \left\lvert\, P_t(y-x) H_{n, n}^{n, 0}\left[\frac{1}{\sigma}\left|\begin{matrix}
	(\tau_k+\alpha_k+1/\varepsilon_k,1/\varepsilon_k)_{1,n}\\
	(\tau_k+1/\xi_k,1/\xi_k)_{1,n}
	\end{matrix}\right.\right] f(x \sigma) \right\rvert\, \mathrm{d}\sigma \mathrm{d}x\\
	&\leq\frac{1}{\pi}\cdot t^{-1}\|f\|_{L_1}(I_3+I_4),
\end{align*}
where
\[
	I_3:=\int_1^2 \left|H_{n, n}^{n, 0}\left[\frac{1}{\sigma}\left|\begin{matrix}
	(\tau_k+\alpha_k+1/\varepsilon_k,1/\varepsilon_k)_{1,n}\\
	(\tau_k+1/\xi_k,1/\xi_k)_{1,n}
	\end{matrix}\right.\right]\right|
	\frac{\mathrm{d}\sigma}{\sigma}
\]
and
\[
	I_4:=\int_2^{\infty} \left| H_{n, n}^{n, 0}\left[\frac{1}{\sigma}\left|\begin{matrix}
	(\tau_k+\alpha_k+1/\varepsilon_k,1/\varepsilon_k)_{1,n}\\
	(\tau_k+1/\xi_k,1/\xi_k)_{1,n}
	\end{matrix}\right.\right]\right|
	\frac{\mathrm{d}\sigma}{\sigma}.
\]
	
For the integral $I_3$, according to \eqref{H-function-limit-2} and conditions (1) and (2), we find
	\[
		H_{n, n}^{n, 0}\left[\frac{1}{\sigma}\left|\begin{matrix}
			(\tau_k+\alpha_k+1/\varepsilon_k,1/\varepsilon_k)_{1,n}\\
			(\tau_k+1/\xi_k,1/\xi_k)_{1,n}
		\end{matrix}\right.\right]= \mathcal{O} \left(\Big(1-\frac{1}{\sigma}\Big)^{\mu_1-1}\right)~(\sigma\rightarrow 1^+)
	\]
	where 
	\[
	\mu_1=\sum\limits_{k=1}^n \alpha_k+\sum\limits_{k=1}^n\left(\frac{1}{\varepsilon_k}-\frac{1}{\xi_k}\right)>0.
	\]
	Therefore there exists a positive number $C_3$ so that 
	\[
		I_3\leq C_3 \int_1^2(\sigma-1)^{\mu_1-1} \sigma^{-\mu_1}\mathrm{d}\sigma<\infty.
	\]
	For the integral $I_4$, according to \eqref{H-function-limit-1} and condition (3), we get
	\[
		H_{n, n}^{n, 0}\left[\frac{1}{\sigma}\left|\begin{matrix}
			(\tau_k+\alpha_k+1/\varepsilon_k,1/\varepsilon_k)_{1,n}\\
			(\tau_k+1/\xi_k,1/\xi_k)_{1,n}
		\end{matrix}\right.\right]= \mathcal{O} \left(\sigma^{-\rho_1}\right) ~ (\sigma\rightarrow \infty).
	\]
	where $\displaystyle 
	\rho_1=\min_{1\leq k\leq n} \tau_k \xi_k+1>0$. Thus there exists a positive number $C_4$ so that 
	\[
		I_4\leq C_4\int_2^{\infty} \sigma^{-\rho_1-1} \mathrm{d}\sigma<\infty.
	\]
	
	Thus, we are allowed to use Fubini's theorem to obtain
	\begin{align*}
		P_t \ast K_{(\varepsilon_k),(\xi_k), n}^{(\tau_k),(\alpha_k)} f 
		&=\int_{\mathbb{R}} \int_1^{\infty} P_t(y-x) H_{n, n}^{n, 0}\left[\frac{1}{\sigma}\left|\begin{matrix}
				(\tau_k+\alpha_k+1/\varepsilon_k,1/\varepsilon_k)_{1,n}\\
				(\tau_k+1/\xi_k,1/\xi_k)_{1,n}
			\end{matrix}\right.\right] f(x \sigma) \mathrm{d}\sigma\mathrm{d} x\\
		&=\int_1^{\infty} H_{n, n}^{n, 0}\left[\frac{1}{\sigma}\left|\begin{matrix}
				(\tau_k+\alpha_k+1/\varepsilon_k,1/\varepsilon_k)_{1,n}\\
				(\tau_k+1/\xi_k,1/\xi_k)_{1,n}
			\end{matrix}\right.\right]\int_{\mathbb{R}} P_t(y-x) f(x \sigma) \mathrm{d}x \mathrm{d}\sigma\\
		&=\int_1^{\infty} H_{n, n}^{n, 0}\left[\frac{1}{\sigma}\left|\begin{matrix}
				(\tau_k+\alpha_k+1/\varepsilon_k,1/\varepsilon_k)_{1,n}\\
				(\tau_k+1/\xi_k,1/\xi_k)_{1,n}
		\end{matrix}\right.\right]\cdot P_t *(D_{1/\sigma} f) \mathrm{d}\sigma.  
	\end{align*}
	
	From \eqref{MainResult-Th-1-Proof-2} and \eqref{MainResult-Th-1-Proof-3} we can see that
	\begin{align}\label{MainResult-Th-2-Proof-1}
		\left|P_t \ast K_{(\varepsilon_k),(\xi_k), n}^{(\tau_k),(\alpha_k)} f\right| 
		&\leq \int_1^{\infty} \left|H_{n, n}^{n, 0}\left[\frac{1}{\sigma}\left|\begin{matrix}
			(\tau_k+\alpha_k+1/\varepsilon_k,1/\varepsilon_k)_{1,n}\\
			(\tau_k+1/\xi_k,1/\xi_k)_{1,n}
		\end{matrix}\right.\right]\right|M_P D_{1/\sigma}f(y)\mathrm{d}\sigma\notag\\
		&=\int_1^{\infty} \left|H_{n, n}^{n, 0}\left[\frac{1}{\sigma}\left|\begin{matrix}
			(\tau_k+\alpha_k+1/\varepsilon_k,1/\varepsilon_k)_{1,n}\\
			(\tau_k+1/\xi_k,1/\xi_k)_{1,n}
		\end{matrix}\right.\right]\right|
		D_{1/\sigma} M_P f(y)\mathrm{d}\sigma.
	\end{align}
	By taking supremum over $t>0$ on both side of \eqref{MainResult-Th-2-Proof-1}, we have
	\begin{align*}
		M_P K_{(\varepsilon_k),(\xi_k), n}^{(\tau_k),(\alpha_k)} f &=\sup _{t>0}\left|P_t \ast K_{(\varepsilon_k),(\xi_k), n}^{(\tau_k),(\alpha_k)} f\right|\\
			&\leq\int_1^{\infty} \left| H_{n, n}^{n, 0}\left[\frac{1}{\sigma}\left|\begin{matrix}
			(\tau_k+\alpha_k+1/\varepsilon_k,1/\varepsilon_k)_{1,n}\\
			(\tau_k+1/\xi_k,1/\xi_k)_{1,n}
		\end{matrix}\right.\right] \right| D_{1/\sigma} M_P f(y)\mathrm{d}\sigma.
	\end{align*}
	
Finally, we obtain
	\begin{align*}
			\left\|K_{(\varepsilon_k),(\xi_k), n}^{(\tau_k),(\alpha_k)} f\right\|_{H^1}
			&=\int_{\mathbb{R}}\left|M_P K_{(\epsilon_k),(\xi_k), n}^{(\tau_k),(\alpha_k)} f(y)\right|\mathrm{d}y\\
			&\leq \int_{\mathbb{R}} \int_1^{\infty} \left| H_{n, n}^{n, 0}\left[\frac{1}{\sigma}\left|\begin{matrix}
				(\tau_k+\alpha_k+1/\varepsilon_k,1/\varepsilon_k)_{1,n}\\
				(\tau_k+1/\xi_k,1/\xi_k)_{1,n}
			\end{matrix}\right.\right] \right| D_{1/\sigma} M_P f(y)\mathrm{d}\sigma \mathrm{d}y\\
			&=\int_{\mathbb{R}} \int_1^{\infty} \left| H_{n, n}^{n, 0}\left[\frac{1}{\sigma}\left|\begin{matrix}
				(\tau_k+\alpha_k+1/\varepsilon_k,1/\varepsilon_k)_{1,n}\\
				(\tau_k+1/\xi_k,1/\xi_k)_{1,n}
			\end{matrix}\right.\right] \right|\cdot 
			M_P f(x \sigma) \mathrm{d}\sigma \mathrm{d}x\\
			&=\left(\int_{\mathbb{R}}M_P f(y)\mathrm{d}y\right)\int_1^{\infty} \left| H_{n, n}^{n, 0}\left[\frac{1}{\sigma}\left|\begin{matrix}
				(\tau_k+\alpha_k+1/\varepsilon_k,1/\varepsilon_k)_{1,n}\\
				(\tau_k+1/\xi_k,1/\xi_k)_{1,n}
			\end{matrix}\right.\right] \right| \frac{\mathrm{d}\sigma}{\sigma} \\
			&=\left(\int_1^{\infty} \left| H_{n, n}^{n,0}\left[\frac{1}{\sigma}\left|\begin{matrix}
				(\tau_k+\alpha_k+1/\varepsilon_k,1/\varepsilon_k)_{1,n}\\
				(\tau_k+1/\xi_k,1/\xi_k)_{1,n}
			\end{matrix}\right.\right] \right|\frac{\mathrm{d}\sigma}{\sigma}\right)\cdot \|f\|_{H_1},
	\end{align*}
which is equivalent to \eqref{MainResult-Th-2-1}. This completes the proof. 
\end{proof}

\begin{remark}
	By letting $\xi_k=\varepsilon_k=\beta$ $(k=1,\cdots,n)$, Theorem \ref{MainResult-Th-2} reduces to the second assertion of Theorem \ref{HoTh}.
\end{remark}

\section{Concluding remarks}

In this section, we provide some useful observations regarding the connections between the operators $I_{(\beta_k),(\lambda_k),m}^{(\gamma_k),(\delta_k)}$ and $K_{(\varepsilon_k),(\xi_k),n}^{(\tau_k),(\alpha_k)}$ and the Hausdorff operators of the form (see, for example, \cite[p. 1647]{Chen-Dai-Fan-Zhu-2018} and \cite[p, 143]{Chen-Fan-Zhu-2016}):
\begin{equation}\label{Hausdorff operator}
		H_{\phi}f(x):=\int_{0}^{\infty}\phi(u)f(xu)\mathrm{d}x.
\end{equation}
For the operator $H_{\phi}$, we have the following result.
\begin{theorem}[{\cite[p. 144]{Chen-Fan-Zhu-2016}}]\label{Theorem-A}
	If $\phi(u)\geq0$, then $H_{\phi}$ is bounded on $H^1(\mathbb{R})$ if and only if 
	\[
	\phi(u)u^{-1/p}\in L^1(0,\infty).
	\] 
\end{theorem}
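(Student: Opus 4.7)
The plan is to establish the two implications separately. The sufficiency direction fits the Poisson-maximal-function template used in the proofs of Theorems \ref{MainResult-Th-1} and \ref{MainResult-Th-2} (specialized to $p=1$, so the hypothesis reads $\phi(u)/u \in L^1(0,\infty)$), while the necessity follows from testing against an explicit $H^1$-atom and exploiting that $\phi \geq 0$.

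For the sufficiency, assume $\int_0^\infty \phi(u) u^{-1} \, du < \infty$. For $f \in H^1 \subset L^1$ and fixed $t > 0$, I would write
\[
(P_t * H_\phi f)(y) = \int_{\mathbb{R}} \int_0^\infty P_t(y-x)\, \phi(u)\, f(xu) \, du \, dx,
\]
verify absolute convergence via Tonelli using the bound $|P_t(y-x)| \leq (\pi t)^{-1}$, the non-negativity of $\phi$, $\|f\|_{L^1} < \infty$, and the integrability hypothesis on $\phi(u)/u$, and then swap the order by Fubini. The inner integral equals $(P_t * D_{1/u} f)(y)$, and applying the pointwise bound $|P_t * D_{1/u} f| \leq M_P D_{1/u} f = D_{1/u} M_P f$ (the last equality is identity \eqref{MainResult-Th-1-Proof-3}), then taking the supremum over $t > 0$, integrating in $y$, and changing variables $z = uy$ in the resulting integral should yield
\[
\|H_\phi f\|_{H^1} \leq \left( \int_0^\infty \phi(u) u^{-1} \, du \right) \|f\|_{H^1}.
\]

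For the necessity, assume $H_\phi$ is bounded on $H^1$. A non-negative test function is unavailable because every $H^1$ function has mean zero, so I would instead test against the odd $H^1$-atom
\[
a(x) = \tfrac{1}{2}\chi_{[0,1]}(x) - \tfrac{1}{2}\chi_{[-1,0]}(x),
\]
which is supported in $[-1,1]$ with $\|a\|_{L^\infty} = \tfrac{1}{2}$, hence $\|a\|_{H^1} \leq C$ for some absolute constant $C$. Using $\phi \geq 0$, a direct computation gives
\[
H_\phi a(x) = \frac{\operatorname{sign}(x)}{2} \int_0^{1/|x|} \phi(u) \, du,
\]
and Tonelli produces $\|H_\phi a\|_{L^1} = \int_0^\infty \phi(u) u^{-1} \, du$. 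Combined with the inclusion $H^1 \subset L^1$ and the assumed boundedness of $H_\phi$, this gives $\int_0^\infty \phi(u) u^{-1} \, du \leq \|H_\phi a\|_{H^1} \leq C\|a\|_{H^1} < \infty$, as required.

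I expect no serious obstacle in either direction. The sufficiency is a clean analogue of the argument in Theorem \ref{MainResult-Th-1} with the $H$-function kernel replaced by the abstract weight $\phi$, the only mildly delicate point being the Tonelli/Fubini justification, which is however routine because $\phi$ is non-negative. The necessity reduces to the explicit computation above thanks to the sign hypothesis on $\phi$; without this sign assumption one would have to work much harder, since the pointwise lower bound for $|H_\phi a|$ would no longer match the $L^1$ norm of $H_\phi a$.
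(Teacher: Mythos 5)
The paper does not prove this statement: it is quoted verbatim from \cite[p.~144]{Chen-Fan-Zhu-2016} as background, so there is no internal proof to compare against. Judged on its own, your argument is correct and is essentially the standard one. The sufficiency half is exactly the template of Theorems \ref{MainResult-Th-1} and \ref{MainResult-Th-2} with the $H$-function kernel replaced by $\phi$ (and is simpler, since non-negativity of $\phi$ makes Tonelli immediate and no asymptotic analysis of the kernel is needed); the necessity half, testing against the odd atom $a=\tfrac12\chi_{[0,1]}-\tfrac12\chi_{[-1,0]}$ and using $H^1\subset L^1$ together with the exact identity $\|H_\phi a\|_{L^1}=\int_0^\infty\phi(u)u^{-1}\,\mathrm{d}u$, is precisely where the hypothesis $\phi\ge0$ enters, and your remark that a non-negative test function is unavailable because $H^1$ functions have vanishing integral is the right justification for choosing an odd atom. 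Two small points worth recording: the exponent $-1/p$ in the quoted statement is a leftover from the general $H^p$ result and should read $-1$ for $H^1$ (you correctly specialize to $p=1$), and the display \eqref{Hausdorff operator} should end in $\mathrm{d}u$ rather than $\mathrm{d}x$; neither affects your argument.
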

Note that the assumption $\phi(u)\geq0$ in Theorem \ref{Theorem-A} \emph{cannot} be removed \cite[p. 144]{Chen-Fan-Zhu-2016}. Therefore, while operators $I_{(\beta_k),(\lambda_k),m}^{(\gamma_k),(\delta_k)}$ and $K_{(\varepsilon_k),(\xi_k),n}^{(\tau_k),(\alpha_k)}$ can both be interpreted as Hausdorff operators defined by \eqref{Hausdorff operator}, the theory of these Hausdorff operators \emph{cannot} be directly applied to  $I_{(\beta_k),(\lambda_k),m}^{(\gamma_k),(\delta_k)}$ and $K_{(\varepsilon_k),(\xi_k),n}^{(\tau_k),(\alpha_k)}$ unless certain positivity conditions are imposed. Furthermore, we need to point out that placing $I_{(\beta_k),(\lambda_k),m}^{(\gamma_k),(\delta_k)}$ and $K_{(\varepsilon_k),(\xi_k),n}^{(\tau_k),(\alpha_k)}$ within the framework of Hausdorff operators will not allow us to bypass the analysis of the $H$-function, which is precisely what makes these operators so interesting to study.

\section*{Acknowledgement}

The research of the second author is supported by National Natural Science Foundation of China (Grant No. 12001095).


\begin{thebibliography}{99}
	\footnotesize
	\setlength{\itemsep}{-2pt}
	
	\bibitem{Chen-Dai-Fan-Zhu-2018}
	J. Chen, J. Dai, D. Fan, X. Zhu, 
		Boundedness of Hausdorff operators on Lebesgue spaces and Hardy spaces, 
		Sci. China, Math. 61 (9) (2018), 1647--1664.
		
	\bibitem{Chen-Fan-Zhu-2016}
	J. Chen, D. Fan, X. Zhu, 
	The Hausdorff operator on the Hardy space $H^{1}(\mathbb{R}^1)$, 
	Acta Math. Hungar., 150 (1) (2016), 142--152.
	
	\bibitem{Galue-Kiryakova-Kalla-1993}
	L. Galu\'{e}, V.S. Kiryakova, S.L. Kalla, 
	Solution of dual integral equations by fractional
	calculus, 
	Math. Balk., 7(1) (1993), 53--72.
	
	\bibitem{Galue-Kalla-Srivastava-1993}
	L. Galu\'{e}, S.L. Kalla, H.M. Srivastava, 
	Further results on an $H$-function generalized fractional calculus, 
	J. Fractional Calculus, 4 (1993), 89--102.

	\bibitem{Grafakos-Book-2014}
	L. Grafakos, 
	Modern Fourier Analysis, 
	(third ed.), Graduate Texts in Mathematics, vol. 250, Springer, New York (2014).
	
	\bibitem{Kalla-Kiryakova-1990a}
	S.L. Kalla, V.S. Kiryakova, 
	A generalized fractional calculus dealing with $H$-functions, 
	Proc. Conf. Fractional Calculus and Its Appl., Tokyo 1989, 1990, 62--69.
	
	\bibitem{Kalla-Kiryakova-1990b}
	S.L. Kalla, V.S. Kiryakova, 
	An $H$-function generalized fractional calculus based upon compositions of Erdelyi-Kober operators in $L_p$.
	Math. Japonica 35 (6) (1990), 1151--1171.
	
	\bibitem{Karp-2020}
	D. Karp, 
	Chapter 12 -- A note on Fox's $H$-function in the light of Braaksma's results. In Special Functions and Analysis of Differential Equations; Agarwal, P., Agarwal, R.P., Ruzhansky, M., Eds.; Chapman and Hall/CRC: New York, NY, USA, 2020; 12p. Available online: \href{https://arxiv.org/abs/1904.10651v1}{arXiv: 1904.10651v1}.
	
	\bibitem{Karp-Prilepkina-2017}
	D.B. Karp, E.G. Prilepkina,
	Some new facts concerning the delta neutral case of Fox's $H$ function, 
	Comput. Methods Funct. Theory 17 (2) (2017), 343--367.
		
	\bibitem{Kilbas-Saigo-Book-2004}
	A.A. Kilbas, M. Saigo, $H$-Transforms and Applications, Analytical Methods and Special Functions, vol. 9, Chapman \& Hall/CRC, 2004.
	
	\bibitem{Kilbas-Srivastava-Trujillo-2006}
	A.A. Kilbas, H.M. Srivastava, J.J. Trujillo, 
	Theory and Applications of Fractional Differential
	Equations, North-Holland Mathematics Studies, vol. 204, Elsevier Science: B.V., Amsterdam, 2006.
	
	\bibitem{Kiryakova-book-1994}
	V. Kiryakova, 
	Generalized fractional calculus and applications, 
	Pitman Res. Notes Math. Ser.,
	vol. 301, Longman/Wiley, New York, 1994.

    \bibitem{Kiryakova-1997}
    V. Kiryakova, 
    All the special functions are fractional differintegrals of elementary functions, 
    J. Phys. A: Math. Gen. 30 (1997), 5085--5103. 
    
    \bibitem{Kiryakova-2010}
    V. Kiryakova, 
    The special functions of fractional calculus as generalized fractional calculus operators of some basic functions, Comput. Math. Appl. 59 (2010), 1128--1141.
	
	\bibitem{Kiryakova-PanevaKonovska-2024}
	V. Kiryakova, J. Paneva-Konovska, 
	Going next after ``A guide to special functions in fractional calculus'' - A discussion survey, Mathematics, 2024, 12, 319.
	
	\bibitem{Ho-2020}
	K.-P. Ho, 
	Erd\'{e}lyi-Kober fractional integral operators on Hardy spaces and BMO,
	Proyecciones 39 (3) (2020), 663--677. 
		
	\bibitem{Mathai-Saxena-Book-1973}
	A.M. Mathai, R.K. Saxena, 
	Generalized Hypergeometric Functions with Applications in Statistics and Physical Sciences,
	Lecture Notes in Math., 348, Springer-Verlag, New York, 1973.
	
	\bibitem{Srivastava-Saxena-2001}
	H.M. Srivastava, R.K. Saxena, 
	Operators of fractional integration and their applications, 
	Appl. Math. Comput. 118 (2001), 1--52. 
	
	\bibitem{Stein-Book-1993}
	E.M. Stein, 
	Harmonic Analysis: Real-Variable Methods, Orthogonality, and Oscillatory Integrals, 
	Princeton, NJ: Princeton University Press, 1993.
\end{thebibliography}
\end{document}